\documentclass[11pt]{amsart}

\usepackage{amsmath,amssymb, mathrsfs}
\usepackage[hidelinks]{hyperref}
\usepackage{enumitem}

\sloppy

\numberwithin{equation}{section}

\newtheorem{theorem}{Theorem}[section]

\newtheorem{lemma}[theorem]{Lemma}

\newtheorem{corollary}[theorem]{Corollary}

\theoremstyle{definition}
\newtheorem{example}[theorem]{Example}

\newtheorem{definition}[theorem]{Definition}

\DeclareMathOperator{\Tv}{TV}
\DeclareMathOperator{\Haus}{\mathscr{H}}
\DeclareMathOperator{\R}{\mathbb{R}}

\newcommand{\norm}[1]{\|#1\|}
\newcommand{\abs}[1]{\lvert#1\rvert}

\begin{document}

\title[Integral representation of functions on the circle]{Integral representation of functions \\ on the circle}
\author{Giuliano Basso}
\keywords{Integral representation, functions on the circle, total variation, Stieltjes measure, Wasserstein 1-distance}
\subjclass[2020]{Primary 45B05, Secondary 26A42, 58C05}
\address{Max Planck Institute for Mathematics, Vivatsgasse 7, 53111 Bonn, Germany}
\email{giuliano.basso@web.de}

\begin{abstract}
We give a complete characterization of all real-valued functions on the unit circle \(S^1\) that can be represented by integrating the spherical distance on \(S^1\) with respect to a signed measure or a probability measure. 
\end{abstract}

\maketitle 	
		
\section{Introduction}	
Given a bounded complete metric space \((X,d)\), we let \(\mathcal{P}(X)\) denote the set of all Borel probability measures on \(X\). For \(\mu\), \(\nu\in \mathcal{P}(X)\) we say that \(\pi\in \mathcal{P}(X\times X)\) is a \textit{coupling} of \((\mu, \nu)\) if \(\pi(A\times X)=\mu(A)\) 
and \(\pi(X\times A)=\nu(A)\) for all Borel subsets \(A\subset X\). The following expression 
\[
W_1(\mu, \nu)=\inf \int_{X\times X} d(x, y) \, \mathrm{d}\pi(x, y),
\]
where the infimum is taken over all couplings \(\pi\) of \((\mu, \nu)\), defines a metric on \(\mathcal{P}(X)\). We refer to this metric as the \(1\)-Wasserstein distance. Wasserstein distances on general metric spaces are an important object of study in optimal transport theory. In the present article, we will focus on the special case when \(X\) is the unit circle \(S^1\subset \R^2\) equipped with the spherical distance \(d_{S^1}\). Recall that the spherical distance \(d_{S^n}\) on the unit sphere \(S^n\subset \R^{n+1}\) is defined by 
\[
d_{S^n}(x,y)=\arccos( \langle x, y \rangle_{\R^{n+1}}).
\]
 In particular, for distinct points \(x\), \(y\in S^1\), we find that \(d_{S^1}(x, y)\) is equal to the length of the shorter arc of \(S^{1}\setminus\{x, y\}\). 
 
The canonical embedding \(\delta\colon S^1\to \mathcal{P}(S^1)\) defined by \(x\mapsto \delta_x\) is an isometric embedding (meaning that \(W_1(\delta(x), \delta(y))=d_{S^1}(x,y)\) for all \(x\), \(y\in S^1\)). This follows directly from the general observation that the product measure \(\delta_x \otimes \delta_y\) is the only coupling of \((\delta_x, \delta_y)\). In general, it seems difficult to predict which metric spaces besides \(S^1\) also admit an isometric embedding into \(\mathcal{P}(S^1)\). To the author's knowledge the only known result in this direction is the following quite remarkable theorem by Creutz \cite{creutz-2020}. 
 
 \begin{theorem}[Creutz embedding theorem]
The canonical embedding \(\delta\colon S^1\to \mathcal{P}(S^1)\) can be extended to an isometric embedding of the closed upper hemisphere \(H^+\subset S^2\) into \(\mathcal{P}(S^1)\).
 \end{theorem}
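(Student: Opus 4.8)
The plan is to realise the isometric extension by sending each point of $H^+$ to a probability measure on $S^1$ whose integral against the spherical distance reproduces the distance from that point to the equator. Identify $\partial H^+$ with $S^1$ and write a point $p\in H^+$ in coordinates $(\rho,\phi)$, where $\phi$ is its longitude and $\rho\in[0,\pi/2]$ its distance from the equator, so that $d_{S^2}(p,x)=\arccos\!\bigl(\cos\rho\cos(\theta-\phi)\bigr)=:g_{\rho}(\theta-\phi)$ for the equator point $x$ of longitude $\theta$. Define $\mu_p$ to be the measure on $S^1$ with density $\tfrac12\bigl(g_{\rho}''(\theta-\phi)\bigr)^{+}+\tfrac{1-\cos\rho}{2\pi}$ with respect to arc length. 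Since $x\mapsto d_{S^1}(x,y)$ has distributional second derivative $2\delta_y-2\delta_{y+\pi}$, the operator $\mu\mapsto\int_{S^1}d_{S^1}(\cdot,y)\,\mathrm d\mu(y)$ kills every even frequency and multiplies the $m$-th frequency by $-4/m^2$ for $m$ odd (sending probability measures to functions of average $\pi/2$); a short Fourier computation then shows that $\mu_p$ is a probability measure — here one uses $\rho\le\pi/2$ — and that $\int_{S^1}d_{S^1}(x,y)\,\mathrm d\mu_p(y)=d_{S^2}(p,x)$ for all $x\in S^1$. (This is the integral-representation theorem at work: the odd frequencies of the density are forced by the equation, while the even ones are a choice — here, the minimal non-negative one plus a constant — that is legitimate precisely for $\rho\le\pi/2$.) For $\rho=0$ the equation forces $\mu_p=\delta_p$, so $p\mapsto\mu_p$ extends the canonical embedding $\delta$, and it remains to prove $W_1(\mu_p,\mu_q)=d_{S^2}(p,q)$ for all $p,q\in H^+$.

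For the lower bound, observe that $\mu_r=\delta_r$ for an equator point $r$, so $W_1(\mu_p,\delta_r)=\int d_{S^1}(r,y)\,\mathrm d\mu_p(y)=d_{S^2}(p,r)$, and hence $W_1(\mu_p,\mu_q)\ge\bigl\lvert d_{S^2}(p,r)-d_{S^2}(q,r)\bigr\rvert$ for every equator point $r$. It therefore suffices to exhibit, for each $p,q\in H^+$, an equator point $r$ with $\bigl\lvert d_{S^2}(p,r)-d_{S^2}(q,r)\bigr\rvert=d_{S^2}(p,q)$. If $p$ and $q$ are not antipodal, the great circle $C$ through them meets the equator in an antipodal pair; taking $r$ to be one of these, $C\cap H^+$ is a half great circle with endpoints $r$ and $-r$ through both $p$ and $q$, and measuring along $C$ one reads off $\bigl\lvert d_{S^2}(p,r)-d_{S^2}(q,r)\bigr\rvert=d_{S^2}(p,q)$. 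The cases where $p$ and $q$ are antipodal (then both lie on the equator) or where $C$ is the equator are immediate.

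For the upper bound one must show that $p\mapsto\mu_p$ is $1$-Lipschitz. As $H^+$ is geodesically convex in $S^2$, it is enough to check that the (Lipschitz) curve $t\mapsto\mu_{\gamma(t)}$ has metric speed at most $1$ along every unit-speed great-circle arc $\gamma$ in $H^+$. Writing $F_t$ for the cumulative distribution function of $\mu_{\gamma(t)}$ and using the circle formula $W_1(\mu,\nu)=\min_{c\in\R}\int_0^{2\pi}\lvert F_{\mu}(\theta)-F_{\nu}(\theta)-c\rvert\,\mathrm d\theta$, the metric speed at time $t$ equals $\min_{c}\int_0^{2\pi}\lvert\partial_{t}F_{t}(\theta)-c\rvert\,\mathrm d\theta$; differentiating the piecewise-explicit $F_t$, with $\gamma=(\rho(t),\phi(t))$ subject to the unit-speed relation $\dot\rho^{2}+\cos^{2}\!\rho\;\dot\phi^{2}=1$, reduces the claim to an elementary inequality. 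Together with the lower bound this gives $W_1(\mu_p,\mu_q)=d_{S^2}(p,q)$ for all $p,q\in H^+$, and a distance-preserving map is automatically an isometric embedding.

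The only real work is this last estimate, and it is where I expect the main difficulty. The density of $\mu_p$ is merely piecewise smooth — it has corners where $g_{\rho}''$ changes sign and where it vanishes — so differentiating $F_t$ and bounding $\min_{c}\int\lvert\partial_tF_t-c\rvert$ uniformly over all inclinations of $\gamma$, while tracking the wrap-around on $S^1$ and the various positional subcases, is a delicate bookkeeping exercise. A reassuring consistency check, which the construction passes, is that along a meridian ($\phi$ constant, $\rho=t$) the metric speed is identically $1$, so that meridians map to $W_1$-geodesics running from the equator to the uniform measure $\mu_N$ — exactly as the geometry of $H^+$ demands.
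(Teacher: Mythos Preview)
The paper does not prove this theorem; it is quoted from Creutz, and only a sketch of his argument appears in the paragraphs following the statement. Your construction of $\mu_p$ via the positive part of $g_\rho''$ plus a constant is essentially the density $\varrho_p$ the paper describes (and is, up to normalisation, precisely the measure produced by the paper's Corollary applied to $f_p$). Your lower-bound step---pick an equator point $r$ on the great circle through $p$ and $q$ so that $\lvert d_{S^2}(p,r)-d_{S^2}(q,r)\rvert=d_{S^2}(p,q)$---is exactly the geometric fact the paper records as $\|f_p-f_q\|_\infty=d_{S^2}(p,q)$.

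The divergence, and the gap, is in the upper bound. Creutz does not bound the metric speed of $t\mapsto\mu_{\gamma(t)}$; instead, using the Cabrelli--Molter formula he proves the \emph{identity}
\[
W_1\bigl(\Phi(p),\Phi(q)\bigr)=\|f_p-f_q\|_\infty
\]
for all $p,q\in H^+\setminus S^1$ at once. Combined with the great-circle observation this gives the exact distance in one stroke, with no differentiation of piecewise CDFs and none of the positional case analysis you anticipate. Your metric-speed route is not wrong in principle, but you explicitly flag the inequality $\min_c\int_0^{2\pi}\lvert\partial_tF_t-c\rvert\,\mathrm d\theta\le 1$ as ``the only real work'' and do not carry it out; as written, the proof is incomplete precisely at its hardest point. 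The cleaner path is Creutz's: tie the Cabrelli--Molter integrand to $f_p-f_q$ (note that $\partial_-f_\mu(x)$ is an affine function of $\mu[T(x),x)$, so differences of CDFs over half-circles already encode $f_p'-f_q'$), which collapses the upper and lower bounds into a single equality and bypasses the bookkeeping entirely.
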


Such an isometric extension \(\Phi\colon H^+\to \mathcal{P}(S^1)\) can be constructed explicitly. For \(p\in H^+\setminus S^1\), the measures \(\Phi(p)\) can be taken to be absolutely continuous with respect to the normalized Hausdorff \(1\)-measure \(\Haus^1\) on \(S^1\). Let \(f_p\colon S^1\to \R\) be defined by \(f_p(x)=d_{S^2}(p, x)\) for \(p\in H^+\setminus S^1\). Creutz showed that there exists a density \(\varrho_p\) on \(S^1\) depending only on \(f_p^{\prime\prime}\) such that 
\[
\Phi(p)=\varrho_p \Haus^1
\]
is a probability measure for which
\begin{equation}\label{eq:we-want}
f_p(x)=\int_{S^1} d_{S^1}(x,y) \, \Phi(p)(\mathrm{d}y) 
\end{equation}
for all \(x\in S^1\). Notice that the right hand side of \eqref{eq:we-want} is nothing but \(W_1(\Phi(p), \delta_x)\). Hence, in particular \(d_{S^2}(p, x)=W_1(\Phi(p), \delta_x)\). Using an analytic expression for the \(1\)-Wasserstein distance on \(S^1\) by Cabrelli and Molter \cite{cabrelli-1995}, Creutz moreover showed that
\[
W_1(\Phi(p), \Phi(q))=\norm{f_p-f_q}_{\infty}
\]
for all \(p\), \(q\in H^+\setminus S^1\). Since any two points \(p\), \(q\in H^+\setminus S^1\) lie on a great circle which intersects \(S^1\), it is now easy to check that \(\norm{f_p-f_q}_{\infty}=d_{S^2}(p, q)\). Hence, \(\Phi\) is an isometric embedding.

In this article, we are interested in the natural question which other functions \(f\colon S^1\to \R\) admit an integral representation as in \eqref{eq:we-want}. To answer this question it turns out to be beneficial to work within the more general framework of signed measures. 

\begin{definition}
A function \(f\colon S^1\to \R\) is \textit{representable by a signed measure} if there exists a signed Borel measure \(\lambda\) on \(S^1\) such that 
\begin{equation}\label{eq:rep-1}
f(x)=\int_{S^1} d_{S^1}(x,y) \, \lambda(\mathrm{d}y). 
\end{equation}
for all \(x\in S^1\).
\end{definition}

Different measures may represent the same function. For example, if \(\lambda\) satisfies \(\lambda(S^1)=1\) and \(\lambda(A)=\lambda(-A)\) for all Borel subsets \(A\subset S^1\), then we have
\[
\int_{S^1} d_{S^1}(x,y) \, \lambda(\mathrm{d}y)=\frac{1}{2}\int_{S^1} \big[d_{S^1}(x,y)+d_{S^1}(x, -y)\big] \, \lambda(\mathrm{d}y)=\frac{\pi}{2}
\]
and so any such \(\lambda\) induces the constant function \(f\equiv \frac{\pi}{2}\). Let \(T\colon S^1\to S^1\) denote the antipodal map \(T(x)=-x\). Clearly, every signed measure \(\lambda\) 
admits a decomposition 
\[
\lambda=\lambda^a+\lambda^s
\]
with \(T_\# \lambda^a=-\lambda^a\) and \(T_\# \lambda^s=\lambda^s\). Here, we use \(T_\# \lambda\) to denote the push-forward of \(\lambda\) under \(T\). Let \(f_\lambda\) denote the right-hand side of \eqref{eq:rep-1}. By the above, it follows that
\begin{equation}\label{eq:useful-1}
f_\lambda=f_{\lambda^a}+(\pi/2)\cdot \lambda(S^1).
\end{equation}
Hence, only the anti-symmetric part of \(\lambda\) induces non-trivial integral representations. Our first result shows that the assignment \(\lambda \mapsto f_\lambda\) is injective when restricted to anti-symmetric measures.

\begin{lemma}\label{lem:uno}
If \(f\colon S^1\to \R\) is induced by \(\lambda\) and \(\eta\), then  \(\lambda^a=\eta^a\). In other words, whenever \(f\) is induced by \(\lambda\) then the anti-symmetric part \(\lambda^a\) of \(\lambda\) is unique.
\end{lemma}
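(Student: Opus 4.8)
The plan is to reduce the statement to an injectivity claim and then verify it by a Fourier computation of the kernel $d_{S^1}$. First, observe that $\lambda\mapsto \lambda^a=\tfrac12(\lambda-T_\#\lambda)$ is linear, and by linearity of the integral in \eqref{eq:rep-1} so is $\lambda\mapsto f_\lambda$. Hence, if $f$ is induced by both $\lambda$ and $\eta$, then $\mu:=\lambda-\eta$ satisfies $f_\mu\equiv 0$, and it suffices to prove the implication: if $f_\mu\equiv 0$ then $\mu^a=0$. Indeed this gives $\lambda^a-\eta^a=\mu^a=0$.

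Next I would pass to Fourier coefficients. Identify $S^1$ with $\R/2\pi\Z$ via $\theta\mapsto(\cos\theta,\sin\theta)$; then $\langle x,y\rangle=\cos(\theta-\phi)$, so $d_{S^1}(x,y)=g(\theta-\phi)$, where $g(s)=\arccos(\cos s)$ is the $2\pi$-periodic "triangle wave" equal to $\lvert s\rvert$ on $[-\pi,\pi]$. Thus $f_\mu=g*\mu$. A direct computation of $\tfrac1\pi\int_0^\pi s\cos(ns)\,\mathrm ds$ gives $\widehat g(0)=\tfrac\pi2$, $\widehat g(n)=0$ for every nonzero even $n$, and $\widehat g(n)=-\tfrac{2}{\pi n^2}\neq 0$ for every odd $n$. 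On the Fourier side the antipodal map $T$, being rotation by $\pi$, acts by $\widehat{T_\#\mu}(n)=(-1)^n\widehat\mu(n)$, so $\widehat{\mu^a}(n)=\tfrac12\bigl(1-(-1)^n\bigr)\widehat\mu(n)$, which equals $\widehat\mu(n)$ for odd $n$ and vanishes for even $n$.

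Now suppose $f_\mu\equiv 0$. By Fubini (legitimate since $g$ is bounded and $\mu$ has finite total variation), $\widehat{f_\mu}(n)$ is a nonzero constant times $\widehat g(n)\,\widehat\mu(n)$, so $\widehat g(n)\,\widehat\mu(n)=0$ for all $n\in\Z$. Since $\widehat g(n)\neq 0$ for $n=0$ and for every odd $n$, we get $\widehat\mu(n)=0$ for all such $n$. Combined with the previous paragraph this yields $\widehat{\mu^a}(n)=0$ for every $n\in\Z$, and by uniqueness of Fourier–Stieltjes coefficients of a finite signed Borel measure on $S^1$ we conclude $\mu^a=0$.

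The step that requires the most care is the passage from $f_\mu\equiv 0$ to $\widehat\mu(n)=0$ for $n=0$ and all odd $n$: one must justify the interchange of integrations giving $\widehat{f_\mu}(n)=\widehat g(n)\,\widehat\mu(n)$ (up to the fixed normalization constant in the convolution), compute $\widehat g$ correctly, and invoke the uniqueness theorem for Fourier–Stieltjes coefficients; the rest is bookkeeping. An essentially equivalent route avoids Fourier series altogether: since $g''=2(\delta_0-\delta_\pi)$ in the distributional sense on $S^1$, one has $f_\mu''=g''*\mu=2(\mu-T_\#\mu)=4\mu^a$, so $f_\mu\equiv 0$ immediately forces $\mu^a=0$.
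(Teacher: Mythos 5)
Your argument is correct, but it takes a genuinely different route from the paper. The paper first notes that \(f(x)+f(-x)=\pi\lambda(S^1)\) forces \(\lambda(S^1)=\eta(S^1)\), reduces via \eqref{eq:useful-1} to \(f_{\lambda^a}=f_{\eta^a}\), and then invokes the uniqueness clause of Theorem~\ref{thm:main}, whose proof rests on the left-derivative formula \eqref{eq:crucial-formula} together with a Dynkin \(\pi\)-\(\lambda\) argument. You instead make the lemma self-contained: after the (correct) linear reduction to showing that \(f_\mu\equiv 0\) implies \(\mu^a=0\), you diagonalize the convolution operator \(\mu\mapsto g*\mu\) in Fourier--Stieltjes coefficients. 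Your computation \(\widehat g(0)=\pi/2\) and \(\widehat g(n)=((-1)^n-1)/(\pi n^2)\) is right, so \(\widehat g\) is nonzero exactly at \(n=0\) and at the odd frequencies, which is precisely the spectral support of anti-symmetric measures; together with the uniqueness theorem for Fourier--Stieltjes coefficients of finite signed measures this yields \(\mu^a=0\). What this buys is a conceptual explanation of why only the anti-symmetric part is visible (the kernel annihilates all nonzero even frequencies) and complete independence from Theorem~\ref{thm:main}; what it costs is reliance on the harmonic analysis of the group \(S^1\), so it produces none of the regularity information about \(\partial_-f\) that the paper's machinery is designed to extract. Your closing observation that \(g''=2(\delta_0-\delta_\pi)\) distributionally, whence \(f_\mu''=4\mu^a\), is in effect the distributional shadow of \eqref{eq:crucial-formula} and of the Stieltjes-measure construction in the proof of Theorem~\ref{thm:main}, so that variant is the closest in spirit to the paper.
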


We now proceed by setting the stage for our main result, Theorem~\ref{thm:main}, which provides an equivalent condition for a function \(f\) on \(S^1\) to be representable by a signed measure.

Let \(q \colon \R \to S^1\) be the covering map defined by \(q(t)=(\cos(t), \sin(t))\). This induces a natural left action of \(\R\) on \(S^1\) by setting \(x+_{q}t=q(a+t)\), for any \(a\in q^{-1}(x)\). We say that \(f\colon S^1\to \R\) is \textit{left-differentiable} (with respect to \(q\)) if  the limit 
\[
\partial_{-}f(x)=\lim_{t\to 0^-} \frac{f(x+_qt)-f(x)}{t}
\]
exists for each \(x\in S^1\). For example, for every \(p\in S^1\) the function \(d_p(x)=d_{S^1}(p,x)\) is left-differentiable. The \emph{total variation} of a function \(f\) on \(S^1\) is defined by
\[
\norm{f}_{\Tv(S^1)}=\sup\sum_{i=0}^n \abs{f(x_i)-f(x_{i+1})},
\]
where the supremum is taken over all partitions \(P=\{x_0, \ldots, x_{n+1}\}\) of \(S^1\). To be precise, \(P\) is called a partition of \(S^1\) if there exists a partition \(t_0 \leq \dotsm \leq t_{n+1}\) of \([0, 2\pi]\subset \R\) such that \(x_i=q(t_i)\). 

Our main theorem states that the above definitions already suffice to fully characterize those functions on the circle that are representable by a signed measure.

\begin{theorem}\label{thm:main}
Let \(f\colon S^1\to \R\) be a function and \(C\in \R\) a real number. Consider the following 
two conditions:\vspace{0.5em}

\begin{enumerate}[label=(\Alph*)]
\item\label{it:a} the sum of images of antipodal points equals \(\pi\cdot C\), that is, 
\[
f(x)+f(-x)=\pi\cdot C
\]
for all \(x\in S^1\). \vspace{0.5em}

\item\label{it:b} \(f\) is Lipschitz continuous and left-differentiable such that the total variation of \(\partial_{-} f\) is finite.
\end{enumerate}
\vspace{0.5em}
Then, assuming \ref{it:a} and \ref{it:b} is equivalent to the existence of a unique signed measure 
\(\lambda\) on \(S^1\)  with \(T_\#\lambda=-\lambda\) such that \(f\) is representable by \(\bar{\lambda}=\lambda+C\cdot\Haus^1\).
\end{theorem}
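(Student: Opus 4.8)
The plan is to prove the two implications of the equivalence separately, reducing in both directions to the anti-symmetric case \(C=0\) by means of the elementary identities \(d_{S^1}(x,y)+d_{S^1}(-x,y)=\pi\) and \(f_{C\cdot\Haus^1}\equiv\tfrac{\pi}{2}C\) (the latter being \eqref{eq:useful-1} applied to the symmetric measure \(C\cdot\Haus^1\)), and exploiting the Riemann--Stieltjes correspondence between finite signed Borel measures on \(S^1\) and functions of finite total variation.

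\textbf{Necessity of \ref{it:a} and \ref{it:b}.} Suppose \(f\) is representable by \(\bar\lambda=\lambda+C\cdot\Haus^1\) with \(T_\#\lambda=-\lambda\). Then \(\lambda(S^1)=0\), so \(\bar\lambda(S^1)=C\), and integrating the identity \(d_{S^1}(x,y)+d_{S^1}(-x,y)=\pi\) against \(\bar\lambda\) gives \ref{it:a}. Lipschitz continuity of \(f\) follows from \(\abs{d_{S^1}(x,y)-d_{S^1}(x',y)}\le d_{S^1}(x,x')\) together with \(\abs{\bar\lambda}(S^1)<\infty\). For the rest of \ref{it:b} one differentiates under the integral sign: writing \(x=q(s)\) and letting \(\sigma(s,y)\in\{\pm1\}\) denote the left derivative at \(q(s)\) of the function \(x\mapsto d_{S^1}(x,y)\), the left difference quotients of \(f\) are bounded by \(1\) and converge pointwise in \(y\), so by dominated convergence
\[
\partial_-f(q(s))=\int_{S^1}\sigma(s,y)\,\bar\lambda(\mathrm{d}y)=\bar\lambda(S^1)-2\,\bar\lambda(A_s),
\]
where \(A_s=q([s,s+\pi))\) is a semicircular arc depending monotonically on \(s\). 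Since each point of \(S^1\) enters and leaves \(A_s\) exactly once as \(s\) runs over a period, a standard telescoping estimate yields \(\norm{\partial_-f}_{\Tv(S^1)}\le 4\,\abs{\bar\lambda}(S^1)<\infty\).

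\textbf{Sufficiency.} Assume \ref{it:a} and \ref{it:b}. Applying \(\partial_-\) along \(q\) to the relation in \ref{it:a} shows that \(g:=\partial_-f\) is anti-symmetric, \(g(q(s))+g(q(s+\pi))=0\); moreover \(g\) is left-continuous, being the left-continuous representative of a function of finite total variation. Hence \(\Lambda(s):=\tfrac14\bigl(g(q(s))-g(q(0))\bigr)\) is a left-continuous function of bounded variation on \([0,2\pi]\) with \(\Lambda(0)=\Lambda(2\pi)=0\), and is therefore the cumulative distribution function of a unique finite signed Borel measure \(\lambda\) on \(S^1\), characterised by \(\lambda(q([0,s)))=\Lambda(s)\). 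Then \(\lambda(S^1)=0\), and comparing values on arcs and using the anti-symmetry of \(g\) one checks \(T_\#\lambda=-\lambda\). Put \(\bar\lambda=\lambda+C\cdot\Haus^1\), so \(\bar\lambda(S^1)=C\). Applying the formula of the previous step to \(\bar\lambda\) and expanding \(\lambda(A_s)\) with the definition of \(\Lambda\) and the anti-symmetry of \(g\) gives \(\lambda(A_s)=-\tfrac12 g(q(s))\), whence \(\partial_-f_{\bar\lambda}(q(s))=-2\lambda(A_s)=g(q(s))=\partial_-f(q(s))\) for every \(s\). Since \(f\) and \(f_{\bar\lambda}\) are Lipschitz, the fundamental theorem of calculus along \(q\) together with \(2\pi\)-periodicity forces \(f-f_{\bar\lambda}\) to be constant, and since both functions satisfy \ref{it:a} with the same value \(\pi C\), this constant vanishes; thus \(f=f_{\bar\lambda}\). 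Uniqueness of \(\lambda\) is then immediate from Lemma~\ref{lem:uno}: if \(\lambda'\) is anti-symmetric with \(f=f_{\lambda'+C\cdot\Haus^1}\), then \(\lambda'\) is the anti-symmetric part of \(\lambda'+C\cdot\Haus^1\) (as \(\Haus^1\) is symmetric), so \(\lambda'=\lambda\).

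\textbf{Main obstacle.} The delicate point is to set up rigorously the correspondence between an anti-symmetric signed measure \(\lambda\) and the function \(s\mapsto\lambda(q([0,s)))\), and to prove the two-sided identity \(\partial_-f_\lambda(q(s))=\lambda(S^1)-2\lambda(A_s)\). This requires justifying the differentiation under the integral sign, pinning down the left-continuous representative of \(\partial_-f\), and handling carefully the open/closed conventions defining the arcs \(A_s\) and \(q([0,s))\) as well as possible atoms of \(\lambda\) at the endpoints \(q(s)\) and \(-q(s)\); one must also verify that the Stieltjes measure attached to a bounded-variation function realises the prescribed value on \emph{all} arcs, not merely on semicircles. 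Conceptually, everything rests on the single fact that the distributional second derivative along \(q\) of \(x\mapsto\int_{S^1}d_{S^1}(x,y)\,\lambda(\mathrm{d}y)\) equals \(2\lambda-2T_\#\lambda\), which is \(4\lambda\) precisely when \(\lambda\) is anti-symmetric.
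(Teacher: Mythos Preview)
Your overall strategy coincides with the paper's: in the necessity direction you derive the key identity \(\partial_-f_{\bar\lambda}(q(s))=\bar\lambda(S^1)-2\,\bar\lambda(A_s)\), and in the sufficiency direction you reconstruct \(\lambda\) as the Stieltjes measure attached to \(\tfrac14\partial_-f\) and then match left derivatives. Two points, however, are not actually proved.

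The substantive gap is the sentence ``moreover \(g\) is left-continuous, being the left-continuous representative of a function of finite total variation''. This is an assertion, not an argument: a function of bounded variation admits a left-continuous modification, but nothing you have written shows that \(\partial_-f\) \emph{is} that modification, and without this your identification \(\lambda(q([0,s)))=\Lambda(s)\) can fail at the atoms you yourself flag in the ``Main obstacle'' paragraph. The paper singles this out explicitly (see the remark right after the statement of Theorem~\ref{thm:main}) and supplies a genuine proof: writing \(h=f\circ q\) and decomposing \(\partial_-h=\alpha-\beta\) with \(\alpha,\beta\) non-decreasing, one has \(h(t)-h(-\pi)=\int_{-\pi}^t\alpha-\int_{-\pi}^t\beta\), so \(h\) is a difference of convex functions; since the left derivative of a convex function is left-continuous, so is \(\partial_-h\). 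You must insert this step (alternatively, you could observe that \(\partial_-f_{\bar\lambda}=\partial_-f\) almost everywhere already suffices for the fundamental-theorem-of-calculus conclusion, but that is not what you wrote).

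The second issue is logical rather than mathematical: you invoke Lemma~\ref{lem:uno} for uniqueness, but in the paper Lemma~\ref{lem:uno} is \emph{deduced from} Theorem~\ref{thm:main}, so the appeal is circular. The paper instead argues uniqueness directly from the identity you already have: if \(\lambda,\eta\) are anti-symmetric and both represent \(f\), then \(\lambda(A_s)=\eta(A_s)\) for every \(s\); anti-symmetry lets one pass from semicircles \([x,T(x))\) to all half-open arcs of length at most \(\pi\), and Dynkin's \(\pi\)--\(\lambda\) theorem then gives \(\lambda=\eta\). You have all the ingredients for this, so the repair is short, but Lemma~\ref{lem:uno} cannot be quoted here as a black box.
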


The measure \(\lambda\) will be a multiple of the Stieltjes measure associated to \(\partial_{-} f\). We emphasize that 
it is not clear \textit{a priori} whether \(\partial_- f\) is left continuous or not. Therefore, an important part of 
the proof will be devoted to deriving this property for \(\partial_- f\). 

There are many examples of functions satisfying the conditions of the theorem. For instance,  a natural class of functions satisfying condition \ref{it:a} are elements of the injective hull \(E(S^1)\) of \(S^1\). A metric space \(Y\) is called \textit{injective} if every \(1\)-Lipschitz map \(f\colon A \to Y\) from any subset \(A\) of a metric space \(X\) can be extended to a \(1\)-Lipschitz map \(F\colon X \to Y\). Basic examples of injective metric spaces include the real line, the Banach spaces \(\ell_\infty^n\) and complete metric \(\R\)-trees. A deep result of Isbell \cite{isbell-1964} from the 1960s shows that every metric space \(X\) has an (essentially) unique injective hull \(E(X)\). This injective metric space can be characterized as the smallest injective space containing \(X\) isometrically. In \cite{moulton-2000}, it is shown that
\[
E(S^1)=\big\{ f\colon S^1 \to \R : \text{\(f\) is \(1\)-Lipschitz and \(f(x)+f(-x)=\pi\)}\big\}
\]
equipped with the supremum norm. See also \cite{lim2021some}. Hence, functions \(f\in E(S^1)\) satisfy condition \ref{it:a} and thus \(f\in E(S^1)\) is representable by a signed measure if and only if it satisfies condition \ref{it:b} of Theorem~\ref{thm:main}.

We now deal with finite Borel measures \(\mu\colon \mathcal{B}(S^1)\to [0, \infty)\) on \(S^1\). One might ask whether every function that is representable by a signed measure is also representable by such a measure. However, this is not possible in general (see Example~\ref{par:ex} below).  By closely inspecting the proof of Theorem \ref{thm:main},  we get the following characterization of functions on \(S^1\) that are representable by a non-negative measure.

\begin{corollary}\label{cor:main}
Let \(f\colon S^1\to \R\) be a function and \(C\in \R_{\geq 0}\) a non-negative real number. Then the following statements are equivalent:
\vspace{0.5em}
\begin{enumerate}[label=(\roman*)]
\item\label{it:two} \(f\) is representable by a Borel measure with total mass \(C\). \vspace{0.5em}
\item\label{it:one} \(f\) satisfies \ref{it:a} and \ref{it:b} of Theorem \ref{thm:main} and \(\norm{\partial_{-} f}_{_{\Tv(S^1)}}\leq 4C\).
\end{enumerate}
\vspace{0.5em}
Moreover, if \(f\) is representable by a measure with total mass \(C\), then there exists a unique Borel measure \(\mu\) on \(S^1\)  with \(\mu(S^1)=\frac{1}{4} \norm{\partial_{-} f}_{_{\Tv(S^1)}}\) such that 
\[
\bar{\mu}=\mu+\big[C-\tfrac{1}{4}\cdot \norm{\partial_{-} f}_{_{\Tv(S^1)}}\big]\cdot\Haus^1
\]
is a non-negative measure and \(f\) is representable by \(\bar{\mu}\).
\end{corollary}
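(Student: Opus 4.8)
The plan is to reduce the corollary to Theorem~\ref{thm:main} by analysing how the symmetric/anti-symmetric splitting interacts with non-negativity. I will use two ingredients. First, writing \(\eta=\eta^a+\eta^s\) for the splitting of a signed measure into its \(T\)-anti-symmetric and \(T\)-symmetric parts (\(T\) the antipodal map), and recalling \(f_\eta=f_{\eta^a}+\tfrac{\pi}{2}\eta(S^1)\) from \eqref{eq:useful-1} together with \(f_{C\Haus^1}\equiv\tfrac{\pi}{2}C\) (as in the constant-function computation of the introduction): if \(\eta\ge0\) then \(\eta^s=\tfrac12(\eta+T_\#\eta)\ge0\) and \(\abs{\eta^a}\le\eta^s\) — since \(\eta^a\le\tfrac12\eta\le\eta^s\) and \(-\eta^a\le\tfrac12 T_\#\eta\le\eta^s\), one compares on a Hahn set of \(\eta^a\) — so in particular \(\abs{\eta^a}(S^1)\le\eta(S^1)\). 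Second, from the proof of Theorem~\ref{thm:main} I will extract that the anti-symmetric measure \(\lambda\) associated to \(f\) (representing \(f\) modulo the term \(C\Haus^1\)) is exactly one quarter of the Stieltjes measure of the left-continuous function \(\partial_-f\), so that \(\abs{\lambda}(S^1)=\tfrac14\norm{\partial_-f}_{\Tv(S^1)}\), using that the total variation of a Stieltjes measure equals the variation of its generating function.

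For \ref{it:two}\(\Rightarrow\)\ref{it:one}, I would start from a non-negative Borel measure \(\mu\) with \(f=f_\mu\) and \(\mu(S^1)=C\), rewrite \(f=f_{\mu^a+C\Haus^1}\) via \eqref{eq:useful-1}, apply Theorem~\ref{thm:main} to conclude that \(f\) satisfies \ref{it:a}, \ref{it:b} with associated anti-symmetric measure \(\lambda=\mu^a\), and then combine the two ingredients: \(\tfrac14\norm{\partial_-f}_{\Tv(S^1)}=\abs{\lambda}(S^1)=\abs{\mu^a}(S^1)\le\mu(S^1)=C\).

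For \ref{it:one}\(\Rightarrow\)\ref{it:two} and the final assertion, I would invoke Theorem~\ref{thm:main} to produce the unique anti-symmetric \(\lambda\) with \(f=f_{\lambda+C\Haus^1}\); the hypothesis \(\norm{\partial_-f}_{\Tv(S^1)}\le4C\) then reads \(\abs{\lambda}(S^1)\le C\). Taking the Jordan decomposition \(\lambda=\lambda^+-\lambda^-\) and combining uniqueness of the Jordan decomposition with \(T_\#\lambda=-\lambda\) gives \(T_\#\lambda^+=\lambda^-\), hence \(\lambda^{\pm}(S^1)=\tfrac12\abs{\lambda}(S^1)\). The candidate is \(\mu:=2\lambda^+\): it is non-negative with \(\mu(S^1)=\abs{\lambda}(S^1)=\tfrac14\norm{\partial_-f}_{\Tv(S^1)}\) and \(\mu^a=\lambda^+-\lambda^-=\lambda\), and since \(C-\tfrac14\norm{\partial_-f}_{\Tv(S^1)}\ge0\) the measure \(\bar\mu=\mu+[C-\tfrac14\norm{\partial_-f}_{\Tv(S^1)}]\Haus^1\) is non-negative with total mass \(C\) and has anti-symmetric part \(\lambda\), so \eqref{eq:useful-1} gives \(f_{\bar\mu}=f_\lambda+\tfrac{\pi}{2}C=f\). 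For uniqueness, given any non-negative \(\mu\) with \(\mu(S^1)=\tfrac14\norm{\partial_-f}_{\Tv(S^1)}\) for which \(f\) is representable by the prescribed \(\bar\mu\), Lemma~\ref{lem:uno} (comparing \(\bar\mu\) with \(\lambda+C\Haus^1\)) forces \(\mu^a=\lambda\); then \(\abs{\mu^a}(S^1)=\abs{\lambda}(S^1)=\mu(S^1)=\mu^s(S^1)\), and combined with \(\abs{\mu^a}\le\mu^s\) this forces \(\mu^s=\abs{\lambda}\) and hence \(\mu=\mu^a+\mu^s=\lambda+\abs{\lambda}=2\lambda^+\).

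The main obstacle, as I see it, is conceptual: realizing that the naive representative \(\lambda+C\Haus^1\) is typically not non-negative, and that one should instead \emph{symmetrize} \(\lambda\) minimally to \(2\lambda^+=\lambda+\abs{\lambda}\) and then absorb the remaining mass \(C-\abs{\lambda}(S^1)\ge0\) into \(\Haus^1\). Everything else — that \(2\lambda^+\) carries the advertised total mass, has the right anti-symmetric part, and is the unique non-negative measure with these properties, and that the threshold is exactly \(4C\) — then comes down to the single inequality \(\abs{\eta^a}\le\eta^s\) for \(\eta\ge0\) and the identity \(\abs{\lambda}(S^1)=\tfrac14\norm{\partial_-f}_{\Tv(S^1)}\). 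The one computation I would want to double-check is that the Stieltjes-measure identification in the proof of Theorem~\ref{thm:main} really carries the factor \(\tfrac14\); this is consistent with the test case \(f=d_p\), where \(\partial_-f\) has total variation \(4\) while the representing measure \(\delta_p\) has total mass \(1\).
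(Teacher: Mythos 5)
Your proposal is correct, and its core construction is the paper's: produce the anti-symmetric \(\lambda\) from Theorem~\ref{thm:main}, take the Jordan decomposition with \(T_\#\lambda^+=\lambda^-\), set \(\mu\) equal to the appropriately scaled positive part (your \(2\lambda^+\) coincides with the paper's \(\tfrac12\lambda^+\) once the normalizations of \(\lambda\) are matched; both give \(\mu=\delta_p\) in the test case \(f=d_p\)), and absorb the leftover mass \(C-\tfrac14\norm{\partial_-f}_{\Tv(S^1)}\ge 0\) into \(\Haus^1\). Where you genuinely diverge is in the two places the paper is terse, and in both your route is the more robust one. For \eqref{it:two}\(\Rightarrow\)\eqref{it:one} the paper only appeals to ``close inspection''; your inequality \(\abs{\eta^a}\le\eta^s\) for \(\eta\ge0\) is exactly the quantitative step that yields the threshold \(4C\). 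For uniqueness, the paper shows \(\mu[x,T(x))=\nu[x,T(x))\) for all \(x\) and then invokes ``the same reasoning as in the proof of Theorem~\ref{thm:main}''; but that reasoning rests on the identity \(2\lambda[x,y)=\lambda[x,T(x))-\lambda[y,T(y))\), which uses \(T_\#\lambda=-\lambda\) and is unavailable for non-negative \(\mu\), \(\nu\) --- indeed \(\Haus^1\) and \(\tfrac12(\delta_p+\delta_{-p})\) agree on all half-circles without being equal, so agreement on half-circles alone does not suffice. Your argument instead saturates \(\abs{\nu^a}\le\nu^s\) against the mass constraint \(\nu^s(S^1)=\nu(S^1)=\abs{\lambda}(S^1)=\abs{\nu^a}(S^1)\), forcing \(\nu^s=\abs{\lambda}\) and hence \(\nu=\lambda+\abs{\lambda}=2\lambda^+\); this closes the uniqueness cleanly and uses the non-negativity of \(\nu\) in an essential (and necessary) way. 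The one item you should write out rather than defer is the identity \(\abs{\lambda}(S^1)=\tfrac14\norm{\partial_-f}_{\Tv(S^1)}\): it follows most directly from \(\partial_-f(x)=-2\lambda[x,T(x))\) together with \(\lambda[x,T(x))-\lambda[y,T(y))=2\lambda[x,y)\), and your sanity check against \(d_p\) confirms that the constant is \(\tfrac14\) for the normalization of \(\lambda\) in which \(f=f_{\lambda+C\Haus^1}\).
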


The appearance of the factor \(4\) in \(\ref{it:one}\) may seem surprising at first sight. It can be interpreted as follows. For \(p\in S^1\) let \(d_p\colon S^1 \to \R\) be defined by \(d_p(x)=d_{S^1}(p, x)\). Then \(d_p \circ q\) is a translate of the 'zigzag' function \(z\colon \R \to \R\) which is the unique \(2\pi\)-periodic function such that \(z(t)=\abs{t}\) on \([-\pi, \pi)\). Thus, the left derivative of \(d_p \circ q\) alternates periodically between the values \(-1\) and \(1\), and its restriction to \([-\pi, \pi)\) has exactly two break points. In particular, we find that the total variation of \(\partial_{-} d_p\) is equal to \(4\). Clearly, \(d_p\) is representable by a probability measure (take \(\mu=\delta_p\)). Now, Corollary~\ref{cor:main} tells us that for \(f\) to be representable by a probability measure it is necessary that the total variation of \(\partial_- f\) is no greater than the total variation of \(\partial_- d_p\).

\section{Proof of the main results}\label{sec:proofOfMain}

Let \(X=(X,d)\) denote a metric space and \(\mathcal{B}(X)\) its Borel \(\sigma\)-algebra. 
A function \(\lambda\colon \mathcal{B}(X)\to \R\) is called a \textit{signed measure} if \(\lambda(\emptyset)=0\) and \(\lambda\) is countably additive, that is,
\[
\lambda(A)=\sum_{i=1}^{\infty} \lambda(A_i)
\]
for all \(A\in \mathcal{B}(X)\) and all countable Borel partitions \((A_i)\) of \(A\). 
The proof of Theorem~\ref{thm:main} makes heavy use of the following well-known consequence of Fubini's theorem.
\begin{lemma}\label{lem:fubini}
Let \(T\geq 0\) be a real number and \(g\colon S^1\to [-T, T]\) a Borel measurable function. Further, 
let \(\phi\colon [-T,T]\to \R\) be absolutely continuous.
If \(\lambda\) is a signed measure on \(S^1\), then
\begin{equation}\label{eq:Fubini}
\int_{S^1} \phi(g(x)) \, \lambda(\mathrm{d}x)=\phi(T) \lambda(S^1)-\int\limits_{-T}^{T} \phi^\prime(t) \,\lambda\big( \{x\in S^1 : g(x) < t \}\big)\, \mathrm{d}t.
\end{equation} 
\end{lemma}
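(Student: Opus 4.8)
The plan is to rewrite $\phi(g(x))$ using the fundamental theorem of calculus for absolutely continuous functions and then swap the order of integration. Concretely, since $\phi$ is absolutely continuous on $[-T,T]$, for every $s\in[-T,T]$ we have
\[
\phi(s)=\phi(T)-\int_{s}^{T}\phi'(t)\,\mathrm{d}t=\phi(T)-\int_{-T}^{T}\phi'(t)\,\mathbf{1}_{\{s<t\}}\,\mathrm{d}t,
\]
where the second equality just records that $s<t \le T$ is the range of integration. Substituting $s=g(x)$ gives $\phi(g(x))=\phi(T)-\int_{-T}^{T}\phi'(t)\,\mathbf{1}_{\{g(x)<t\}}\,\mathrm{d}t$ pointwise in $x$. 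Integrating this identity against $\lambda$ over $S^1$ and using linearity of the integral (valid since $\lambda(S^1)$ is finite and everything in sight is bounded) reduces the claim to the interchange
\[
\int_{S^1}\!\!\left(\int_{-T}^{T}\phi'(t)\,\mathbf{1}_{\{g(x)<t\}}\,\mathrm{d}t\right)\lambda(\mathrm{d}x)=\int_{-T}^{T}\!\!\phi'(t)\left(\int_{S^1}\mathbf{1}_{\{g(x)<t\}}\,\lambda(\mathrm{d}x)\right)\mathrm{d}t,
\]
after which the inner integral on the right is exactly $\lambda(\{x\in S^1: g(x)<t\})$ and we are done.

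The interchange is where one must be slightly careful, because $\lambda$ is a \emph{signed} measure rather than a non-negative one, so one cannot invoke Tonelli directly. The standard fix is to pass to the Jordan decomposition $\lambda=\lambda^{+}-\lambda^{-}$ into mutually singular finite non-negative measures and handle each piece separately. For each of $\lambda^{\pm}$ one checks the Fubini hypothesis on the product space $S^1\times[-T,T]$: the map $(x,t)\mapsto \phi'(t)\,\mathbf{1}_{\{g(x)<t\}}$ is jointly measurable (it is the product of the measurable function $\phi'\circ\mathrm{pr}_2$ with the indicator of the measurable set $\{(x,t): g(x)<t\}$, which is measurable because $g$ is Borel), and it is integrable for the product measure $\lambda^{\pm}\otimes\mathcal{L}^1$ since $\int_{S^1}\int_{-T}^{T}|\phi'(t)|\,\mathrm{d}t\,\lambda^{\pm}(\mathrm{d}x)=\lambda^{\pm}(S^1)\,\|\phi'\|_{L^1[-T,T]}<\infty$ (here $\|\phi'\|_{L^1}<\infty$ is precisely absolute continuity of $\phi$). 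Fubini's theorem then applies to each of $\lambda^{\pm}$, and subtracting the two identities recombines them into the statement for $\lambda$. Finally one rejoins the $\phi(T)\lambda(S^1)$ term, yielding \eqref{eq:Fubini}.

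I do not expect any genuine obstacle here; the only point requiring attention is the bookkeeping around the signed measure and the verification of joint measurability and integrability needed to invoke Fubini, all of which is routine. One could alternatively prove the identity first for $\phi\in C^1$ by a direct computation and then extend to absolutely continuous $\phi$ by approximation in $W^{1,1}$, but the Jordan-decomposition-plus-Fubini route is cleaner and avoids any limiting argument.
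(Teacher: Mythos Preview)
Your argument is correct and follows exactly the route the paper has in mind: the paper calls the lemma ``a well-known consequence of Fubini's theorem'' and omits the proof entirely, and your write-up---fundamental theorem of calculus for absolutely continuous $\phi$, then Fubini applied to each piece of the Jordan decomposition of $\lambda$---is the standard way to make that one-line remark precise. There is nothing to add or compare.
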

\begin{proof}
Omitted.
\end{proof}

Recall that \(\abs{\lambda}\colon \mathcal{B}(X) \to [0, \infty)\) defined by 
\begin{equation*}
B\mapsto \sup\bigg\{ \sum_{i=1}^n \abs{\lambda(B_i)} : (B_i)_{i=1}^n \text{ Borel partition of \(B\)} \bigg\}
\end{equation*}
is a Borel measure on \(X\). It is called \textit{the total variation of \(\lambda\)}.
Now, we are already in a position to prove Theorem~\ref{thm:main}.
 
\begin{proof}[Proof of Theorem~\ref{thm:main}]
We begin by showing the reverse implication. Since by assumption \(T_\#\lambda=-\lambda\), we get that \(\lambda(S^1)=0\) and thus \(\bar{\lambda}(S^1)=C\). Moreover, for all \(x\in S^1\) we compute
\[
f_{\bar{\lambda}}(x)+f_{\bar{\lambda}}(T(x))=\int_{S^1} d_{S^1}(x,y) \, \bar{\lambda}(\mathrm{d}y)+\int_{S^1}d_{S^1}(T(x),y) \, \bar{\lambda}(\mathrm{d}y)=\pi \cdot \bar{\lambda}(S^1).
\]
This establishes \ref{it:a}. Next, we show \ref{it:b}. Since each function \(d_{S^1}(\cdot, y)\) is \(1\)-Lipschitz, 
it is easy to check that \(f_{\bar{\lambda}}\) is \(L\)-Lipschitz for \(L=\abs{\lambda}(S^1)\). 
By virtue of Lebesgue's dominated convergence theorem, \(f_{\bar{\lambda}}\) is left-differentiable. Indeed, we have
\begin{equation}\label{eq:crucial-formula}
\begin{split}
\partial_-f_{\bar{\lambda}}(x)&=\int_{S^1} \partial_-(d_{S^1}(\cdot, y))(x) \, \lambda(\mathrm{d}y) \\
&=\lambda[T(x), x)-\lambda[x, T(x)),
\end{split}
\end{equation}
where \([x, T(x))=\{ x+_q t : t\in [0, \pi)\}\). Here, as in the introduction, \(q\colon \R\to S^1\) is 
defined by \(q(t)=(\cos(t), \sin(t))\). By the above, it follows that 
\begin{equation*}
\norm{\partial_{-} f_{\bar{\lambda}}}_{_{\Tv(S^1)}}\leq \abs{\lambda}(S^1).
\end{equation*}
This establishes \ref{it:b}, as desired. 

Conversely, suppose now that conditions \ref{it:a} and \ref{it:b} hold. Due to \ref{it:b} we obtain that \(h:=f\circ q\) is left-differentiable and the total variation of \(\partial_- h\) is finite on every bounded interval \(I\subset \R\). In particular, there exist bounded non-decreasing functions \(\alpha\), \(\beta\colon [-\pi, \pi]\to \R\) such that \(\partial_{-} h=\alpha-\beta\) on \([-\pi,\pi]\). Since \(h|_{[-\pi, \pi]}\) is Lipschitz, it follows that
\[
h(t)-h(-\pi)=\int\limits_{-\pi}^t \partial_- h(s)\, \mathrm{d}s=\int\limits_{-\pi}^t \alpha(s)\, \mathrm{d}s-\int\limits_{-\pi}^t \beta(s)\, \mathrm{d}s.
\]
Hence, \(h|_{[-\pi,\pi]}\) can be written as the difference of two convex functions. Since the left-derivative of a convex function is left-continuous (see e.g. \cite[Theorem 24.1]{rockafellar--1970}), this implies that \(\partial_{-}h\) is left-continuous.  

We now consider the Stieltjes measure \(\lambda^\star:=d(\partial_{-} h)\) on \([\pi, \pi)\). It is well-known that this is the unique signed Borel measure on \([-\pi, \pi)\) such that
\begin{equation}\label{eq:Stjielt}
\lambda^\star[s,t)=\partial_{-}h(t)-\partial_{-}h(s)
\end{equation}
for all \(s,t\in [-\pi, \pi)\) with \(s\leq t\). We set \(\lambda=q_\# \lambda^\star\). Because of \ref{it:a}, we find that
\begin{equation}\label{eq:antiS}
\partial_{-}h(t)=-\partial_{-}h(t-\pi)
\end{equation}
for all \(t\in \R\); as a result, \(T_\#\lambda=-\lambda\). Using Lemma \ref{lem:fubini}, we get for each \(t\in [0,\pi)\) that
\begin{equation}\label{eq:AppFub}
\begin{split}
\int_{S^1} d(q(t),y) \, \lambda(\mathrm{d}y)=&\, \pi \cdot \lambda(S^1)+\int\limits_{t-\pi}^{t}  \,\lambda^\star[-\pi, s) \, \mathrm{d}s \\
&-\int\limits_{-\pi}^{t-\pi}  \,\lambda^\star[-\pi, s)\, \mathrm{d}s-\int\limits_{t}^{\pi} \,\lambda^\star[-\pi, s) \, \mathrm{d}s. 
\end{split}
\end{equation}
From \eqref{eq:Stjielt}, \eqref{eq:antiS} and \eqref{eq:AppFub} it follows that
\begin{equation*}
\begin{split}
\int_{S^1} d_{S^1}(q(t),y) \, \lambda(\mathrm{d}y)=2\int\limits_{t-\pi}^{t}  \,\partial_{-}h(s)\, \mathrm{d}s=4f(q(t))-2\pi C.
\end{split}
\end{equation*}
for all \(t\in [0, \pi)\). Hence, by setting \(\bar{\lambda}=\frac{1}{4}\lambda+C\cdot \Haus^1\), we find that
\begin{equation}\label{eq:result}
\int_{S^1} d_{S^1}(q(t),y) \, \bar{\lambda}(\mathrm{d}y)=f(q(t))
\end{equation}
for all \(t\in [0,\pi)\). Using condition \ref{it:a} and \(\bar{\lambda}(S^1)=C\), a short calculation now shows that \eqref{eq:result} also holds for all \(t\in[-\pi, 0)\).
Hence, \(f=f_{\bar{\lambda}}\), as desired. 

To finish the proof we need to show that \(\lambda\) is unique. To this end,  suppose \(\eta\) is a signed measure such that \(T_\# \eta=-\eta\) (or equivalently \(\eta^a=\eta\)) and \(f\) is representable by \(\bar{\eta}=\eta+C\cdot \Haus^1\). It follows from \eqref{eq:crucial-formula} that 
\[
\lambda[T(x), x)-\lambda[x, T(x))=\eta[T(x), x)-\eta[x, T(x))
\]
for all \(x\in S^1\). Since \(\lambda(S^1)=\eta(S^1)=0\), we obtain that \(\lambda\) and \(\eta\) agree on all half-open intervals \([x, T(x))\). For all \(y\in [x, T(x))\), we have
\[
2\cdot \lambda [x, y)=\lambda[x, T(x))-\lambda[y, T(x)),
\]
and therefore \(\lambda\) and \(\eta\) agree on all half-open intervals of length less than or equal to \(\pi\). Since these intervals generate the Borel \(\sigma\)-algebra of \(S^1\), a standard application of Dynkin’s \(\pi\)-\(\lambda\) theorem yields that \(\lambda=\eta\).
\end{proof}

\begin{proof}[Proof of Lemma~\ref{lem:uno}]
Suppose that \(\lambda\) and \(\eta\) are signed measures such that \(f=f_\lambda=f_\eta\). Since
\[
f(x)+f(-x)=\int_{S^1} \big[d_{S^1}(x, y)+d_{S^1}(-x, y)\big]\, \lambda(\mathrm{d}y)=\pi\cdot \lambda(S^1),
\]
we find that \(\lambda(S^1)=\eta(S^1)\). Now, \eqref{eq:useful-1} tells us that \(f_\lambda=f_{\lambda^a}+\tfrac{\pi}{2}\cdot \lambda(S^1)\) and thereby it follows that \(f_{\lambda^a}=f_{\eta^a}\). Hence, because of Theorem~\ref{thm:main} we have that \(\lambda^a=\eta^a\), as was to be shown. 
\end{proof}

\begin{proof}[Proof of Corollary~\ref{cor:main}]
By a close inspection of the proof of Theorem~\ref{thm:main}  it is readily verified that \(\ref{it:two} \Longrightarrow \ref{it:one}\). Thus, it remains to show that \(\ref{it:one} \Longrightarrow \ref{it:two}\). To this end, suppose \(f\) satisfies all conditions of \(\ref{it:one}\). In particular, \(f\) satisfies conditions \ref{it:a} and \ref{it:b} of Theorem~\ref{thm:main}. Let \(\lambda\) denote the signed measure from Theorem~\ref{thm:main}.
We set \(\mu:=\tfrac{1}{2}\lambda^+\), where \(\lambda=\lambda^+-\lambda^-\) is the Jordan decomposition of \(\lambda\).
By construction, \(T_\#\lambda^+=\lambda^-\) and thus \(\mu(S^1)=\frac{1}{4}\abs{\lambda}(S^1)=\frac{1}{4}\norm{\partial_{-} f}_{_{\Tv(S^1)}}\). Since for all \(x\in S^1\),
\begin{equation*}
\begin{split}
2f_{\bar{\lambda}}(x)-\pi\cdot C&=\int_{S^1} d_{S^1}(x,y) \, \mu(\mathrm{d}y)-\int_{S^1} d_{S^1}(x,T(y)) \, \mu(\mathrm{d}y) \\
&=2 \int_{S^1} d_{S^1}(x,y) \, \mu(\mathrm{d}y)-\pi\cdot\mu(S^1), 
\end{split}
\end{equation*}
it follows that \(f=f_{\bar{\mu}}\), as was to be shown. To finish the proof we need to show that if \(\nu\) is a Borel measure on \(S^1\) with \(\nu(S^1)=\frac{1}{4} \norm{\partial_{-} f}_{_{\Tv(S^1)}}\) such that 
\[
\bar{\nu}=\nu+\big[C-\tfrac{1}{4}\cdot \norm{\partial_{-} f}_{_{\Tv(S^1)}}\big]\cdot\Haus^1
\]
satisfies \(f_{\bar{\nu}}=f_{\bar{\mu}}\), then \(\nu=\mu\). Clearly, \(\bar{\mu}^a=\mu^a\) and thus it follows from Theorem~\ref{thm:main} that \(\mu^a=\nu^a\). In particular,
using that \(\mu(S^1)=\nu(S^1)\), we find that \(\mu[x, T(x))=\nu[x, T(x))\) for all \(x\in S^1\). Now, exactly the same reasoning as in the proof of Theorem~\ref{thm:main} shows that \(\mu=\nu\). This completes the proof.
\end{proof}

We finish this section with an example showing that not every function that is representable by a signed measure is also representable by a non-negative measure.

\begin{example}\label{par:ex} We suppose in the following that the reader is familiar with basic notions from metric geometry. Good general references for this topic are \cite{burago--2001, bridson--1999}. Let \(X\) be the metric space that is obtained by gluing  \(S^1\) and a tripod with edges of length \(\frac{\pi}{3}\) along three equidistant points \(x_1, x_2, x_3\) of \(S^1\). We equip \(X\) with its intrinsic metric \(d\). Let \(o\in X\) denote the center of the tripod. The map \(d_o\colon S^1\to \R\) defined by \(x\mapsto d(x,o)\) satisfies conditions \ref{it:a} and \ref{it:b} of Theorem \ref{thm:main} with \(C=1\). Thus, Theorem \ref{thm:main} tells us that \(d_o\) is representable by a signed measure. Suppose now that there exists a probability measure \(\mu\) on \(S^1\) such that \(f_\mu=d_o\). 
In the following, we show that this is not possible. 

In \cite{creutz-2020}, Creutz showed that \(f_\mu=d_o\) implies that every \(1\)-Lipschitz map from \(S^1\) to a Banach space extends to a \(1\)-Lipschitz map on \(\{o\}\cup S^1\subset X\). However, there exist \(1\)-Lipschitz maps \(\phi\colon S^1\to \R^2\) that do not permit a \(1\)-Lipschitz extension to \(\{o\}\cup S^1\subset X\). 
Indeed, let \(\Delta\subset \R^2\) be an equilateral triangle with perimeter \(2\pi\) and vertices \(v_1, v_2, v_3\) and let \(\phi\colon S^1\to \R^2\) be the inverse map of the map \(\Delta\to S^1\) that is distance-preserving on the edges and sends \(v_i\) to \(x_i\). Clearly, \(\phi\) is \(1\)-Lipschitz. Suppose \(\Phi\colon \{o\}\cup S^1\to \R^2\) is a \(1\)-Lipschitz extension of \(\phi\). Then 
\[
\Phi(o)\in \bigcap_{i=1}^3 B_{\frac{\pi}{3}}(v_i),
\]
which is not possible. Hence, such a map cannot exist, which in turn implies that there is no probability measure \(\mu\) on \(S^1\) with \(f_\mu=d_o\). 
\end{example}
Let \(d_o\) denote the function from Example~\ref{par:ex}. We now give another (shorter) proof of why \(d_o\) cannot be represented 
by any measure.  Since for all \(x\in S^1\), we have that \(d_o(x)+d_o(T(x))=\pi\), it follows that if \(d_o\) were representable 
by a measure \(\mu\), then \(\mu\) must necessarily be a probability measure. But a straightforward calculation reveals that
\[
\norm{\partial_{-} d_o}_{_{\Tv(S^1)}}=12>4;
\]
hence, Corollary~\ref{cor:main} implies that \(d_o\) is not representable by a probability measure. 
Thus, it is not representable by any measure.  

\let\oldbibliography\thebibliography
\renewcommand{\thebibliography}[1]{\oldbibliography{#1}
\setlength{\itemsep}{3pt}}

\bibliography{refs}
\bibliographystyle{alpha}

\end{document}